\newtheorem{lemma}{Lemma}[section]
\newtheorem{theorem}[lemma]{Theorem}
\newtheorem{proposition}[lemma]{Proposition}
\theoremstyle{definition}
\newtheorem{definition}[lemma]{Definition}
\newtheorem{remark}[lemma]{Remark}
\newenvironment{example}
  {\pushQED{\qed}\examplex}
  {\popQED\endexamplex}
\newcommand{\kk}{\ensuremath{\Bbbk}} 
\newcommand{\PP}{\ensuremath{\mathbb{P}}}
\newcommand{\ZZ}{\ensuremath{\mathbb{Z}}} 
\newcommand{\cO}{\ensuremath{\mathcal{O}}}
\DeclareMathOperator{\codim}{codim}
\DeclareMathOperator{\coker}{coker}
\DeclareMathOperator{\Cox}{Cox}
\DeclareMathOperator{\Pic}{Pic}
\DeclareMathOperator{\rank}{rank}
\DeclareMathOperator{\Spec}{Spec}
\DeclareMathOperator{\depth}{depth}
\DeclareMathOperator{\Fitt}{Fitt}
\DeclareMathOperator{\Div}{Div}
\def\bbordermatrix#1{\begingroup \m@th
  \@tempdima 4.75\p@
  \setbox\z@\vbox{%
    \def\cr{\crcr\noalign{\kern2\p@\global\let\cr\endline}}%
    \ialign{$##$\hfil\kern2\p@\kern\@tempdima&\thinspace\hfil$##$\hfil
      &&\quad\hfil$##$\hfil\crcr
      \omit\strut\hfil\crcr\noalign{\kern-\baselineskip}%
      #1\crcr\omit\strut\cr}}%
  \setbox\tw@\vbox{\unvcopy\z@\global\setbox\@ne\lastbox}%
  \setbox\tw@\hbox{\unhbox\@ne\unskip\global\setbox\@ne\lastbox}%
  \setbox\tw@\hbox{$\kern\wd\@ne\kern-\@tempdima\left[\kern-\wd\@ne
    \global\setbox\@ne\vbox{\box\@ne\kern2\p@}%
    \vcenter{\kern-\ht\@ne\unvbox\z@\kern-\baselineskip}\,\right]$}%
  \null\;\vbox{\kern\ht\@ne\box\tw@}\endgroup}
\begin{document}

\vspace*{3.0em}

\title[What makes a complex a virtual resolution?]{What makes a complex a virtual resolution?}
%\date{\today}
\author[M.C. Loper]{Michael C. Loper}
\address{Department of Mathematics, University of
  Wisconsin - River Falls}
  \email{michael.loper@uwrf.edu}
\subjclass[2010]{13D02, 14M25}

\begin{abstract}
Virtual resolutions are homological representations of finitely generated $\Pic(X)$-graded modules over the Cox ring of a smooth projective toric variety. In this paper, we identify two algebraic conditions that characterize when a chain complex of graded free modules over the Cox ring is a virtual resolution. We then turn our attention to the saturation of Fitting ideals by the irrelevant ideal of the Cox ring and prove some results that mirror the classical theory of Fitting ideals for Noetherian rings.
\end{abstract}

\maketitle

%%%%%%%%%%%%%%%%%%%%%%%%%%%%%%%%%%%%%%%%%%%%%%%%%%%%%%%%%%%%%%%%%%%%%%%%%%%%%%
\setcounter{section}{0}

\section{Introduction}
\label{sec:intro}
In a famous paper, Buchsbaum and Eisenbud present two criteria that completely determine whether or not a chain complex is exact over a Noetherian ring \cite{be}. This is done without examining the homology of the complex. These criteria are useful in investigating a module by examining the minimal free resolution.

In turn, these criteria can be used to study the geometry of projective space. Coherent sheaves over projective space correspond to finitely generated graded modules over a standard-graded polynomial ring. Properties of this module and the sheaf associated to the graded module such as degree, dimension, and Hilbert polynomial, are encoded in the minimal free resolution of the graded module.

Before stating the main theorem from \cite{be}, we must introduce some notation. A map of free $S$-modules $\varphi\colon  F \rightarrow G$ can be expressed as a matrix with entries in $S$ by choosing bases of $F$ and $G$. Denote by $I_r(\varphi)$ the ideal generated by the $r \times r$ minors of $\varphi$. Then $\rank(\varphi)$ will be the largest $r$ such that $I_r(\varphi) \ne 0$. The ideal $I_{\rank(\varphi)}(\varphi)$ will be the most important of these ideals of minors, and we set $I(\varphi) := I_{\rank(\varphi)}(\varphi)$. By convention, we define $I_k(\varphi) = S$ for every integer $k \le 0$.

In fact, these ideals of minors can be extended to projective modules (that may not be finitely generated). Indeed $\varphi$ gives rise to a map $\bigwedge^r \varphi\colon  \bigwedge^r F \rightarrow \bigwedge^r G$, and the rank of $\varphi$ is the largest $r$ such that $\bigwedge^r \varphi \ne 0$. In this context, $I_r(\varphi)$ is the image of the map $\bigwedge^r F \otimes (\bigwedge^r G)^* \rightarrow S$. The main theorem from \cite{be} can now be stated.

\begin{theorem}[\cite{be}]
\label{thm:be}
Let $R$ be a Noetherian ring. Suppose
\[F_\bullet :=  \left[F_0 \xleftarrow{\varphi_1} F_{1} \xleftarrow{\varphi_2} F_2 \xleftarrow{\varphi_3} \cdots \xleftarrow{\varphi_n} F_n \longleftarrow 0 \right] \]
is a chain complex of free $R$-modules. Then $F_\bullet$ is exact if and only if both of the following conditions are satisfied:
  \begin{enumerate}[(a)]
  \item $\rank(\varphi_{i}) + \rank(\varphi_{i+1}) = \rank(F_i)$ (taking $\varphi_{n+1} = 0$),
 
  \item $\depth(I(\varphi_i)) \ge i$
  \end{enumerate}
  for each $i = 1,2,...,n$.
\end{theorem}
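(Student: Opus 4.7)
The plan is to prove each direction separately, working locally since exactness, the rank condition, and the depth condition all behave well under localization over a Noetherian ring. I would treat the forward direction first since it is considerably easier, then tackle the converse by induction on the length $n$ of the complex.

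For the forward direction (exactness implies (a) and (b)), I would verify (a) by localizing at each minimal prime $P$ of $R$. There $R_P$ is Artinian, and the exact sequence of free $R_P$-modules gives rank additivity $\rank \varphi_i + \rank \varphi_{i+1} = \rank F_i$ directly from length considerations. Since the rank of a map over a Noetherian ring is controlled by its behavior at minimal primes, (a) follows globally. For (b) I would argue by contradiction: if $\depth I(\varphi_i) < i$ for some $i$, choose a prime $P \supseteq I(\varphi_i)$ with $\depth R_P < i$ and localize. All maximal minors of $\varphi_i$ then lie in the maximal ideal of $R_P$, so the truncation of $F_\bullet$ resolves $\coker \varphi_1$ locally with projective dimension at least $i$, contradicting the Auslander--Buchsbaum inequality $\pdim \leq \depth R_P$.

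For the converse, I would induct on the length $n$, reducing to the local case at an arbitrary prime $P$. If $I(\varphi_n) \not\subseteq P$, then some $r \times r$ minor of $\varphi_n$ (with $r = \rank \varphi_n$) is a unit after localization; a change of basis splits off an identity block, producing a shorter complex of length $n-1$. One then checks that the shortened complex still satisfies (a) and (b), so induction applies. If instead $I(\varphi_n) \subseteq P$, condition (b) gives $\depth R_P \geq n$, which is precisely the hypothesis needed to invoke the Peskine--Szpiro acyclicity lemma: a complex of free $R_P$-modules of length $n$ whose ideals of minors have depth at least $n$ at the top and whose ranks add up correctly is necessarily exact.

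The main obstacle is the bookkeeping in the converse. The delicate step is verifying that splitting off the unit block at the top really does preserve both conditions on the truncated complex, because the ideals of minors of the modified $\varphi_{n-1}$ change in a controlled but nontrivial way. A secondary subtlety is that associated primes of the candidate homology modules must be shown to contain the appropriate ideals of minors and simultaneously to have large depth, so that the acyclicity lemma can close out the argument — this is the point where condition (a) on ranks and condition (b) on depths interlock, and making that interlocking precise is where essentially all of the real work lies.
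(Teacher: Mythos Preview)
The paper does not prove this statement: Theorem~\ref{thm:be} is quoted from \cite{be} and used as a black box, so there is no argument in the paper to compare your proposal against.

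On its own merits, your converse has a genuine gap in Case~2. When $I(\varphi_n)\subseteq P$, condition (b) does give $\depth R_P\geq n$, so each $(F_i)_P$ has depth at least $i$; but the Peskine--Szpiro Acyclicity Lemma also requires that every nonzero $H_i(F_P)$ have depth $0$, and nothing in your setup establishes this. Since your induction is on the length $n$ and the complex has not been shortened in Case~2, the inductive hypothesis gives you no leverage there, and the appeal to Peskine--Szpiro as you phrase it (``ideals of minors have depth at least $n$ at the top and ranks add up correctly'') is not what that lemma says --- it is essentially the statement you are trying to prove. The remedy is to change the induction parameter. The paper's proof of the analogous Theorem~\ref{mainthm} inducts on $\codim P$: at codimension $c$, condition (b) forces $I(\varphi_{c+1})\not\subseteq P$, so one \emph{always} lands in your Case~1 at the top, and then the inductive hypothesis at strictly smaller primes pins the homology of the truncated complex to the closed point, supplying exactly the depth-$0$ input Peskine--Szpiro needs. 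Buchsbaum and Eisenbud's original argument instead inducts on $\depth R_P$ and reduces modulo a nonzerodivisor. Either scheme closes the gap; induction on $n$ with your Case~1/Case~2 dichotomy does not.
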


When the toric variety is projective space, the locally free resolutions of coherent sheaves over $\PP^N$ and the free resolutions of $\Cox(\PP^N)$-modules coincide. Unfortunately when studying coherent sheaves over more general smooth projective toric varieties, the situation is not as well-behaved. Locally free resolutions of a coherent sheaf are often shorter and thinner than the corresponding minimal free resolutions of the modules. Tying these concepts more closely together, Berkesch, Erman, and Smith introduced the notion of virtual resolutions in \cite{bes}. The main theorem in the present paper (Theorem~\ref{mainthm}) is the virtual analogue to the main theorem of \cite{be} (Theorem~\ref{thm:be}).

\subsection*{Notation}
Throughout this paper, $X$ will be a smooth projective toric variety and $S$ will denote the Cox ring of $X$ over an algebraically closed field $\kk$. The Cox ring $S$ is graded by the Picard group of $X$, which we denote by $\Pic(X)$ \cite{cox}*{\textsection 1}. In particular, $S$ is a polynomial ring with a multigrading by $\ZZ^r$ for $r = \rank \; \Pic(X)$. Let $B$ denote the irrelevant ideal of $S$, which is radical; $M$ will be a finitely generated $\Pic(X)$-graded module over $S$ and $\widetilde{M}$ denotes the sheaf of $M$ over $X$, as constructed in \cite{cox}*{\textsection 3}. Given an ideal $I$ of $S$, we denote the set of all homogenous prime ideals containing $I$ by $V(I)$.

\begin{definition}
  \label{def:vres}
  A graded free complex
  \[F_\bullet :=  [F_0 \xleftarrow{\varphi_1} F_1 \longleftarrow \cdots \longleftarrow F_{n-1} \xleftarrow{\varphi_n} F_n \longleftarrow \cdots]\]
of $S$-modules is called a \textbf{virtual resolution} of $M$ if the corresponding complex of vector bundles $\widetilde{F_\bullet}$ is a locally free resolution of the sheaf $\widetilde{M}$.
\end{definition}

The above definition uses the geometric language, but virtual resolutions can be equivalently defined algebraically. The $\Pic(X)$-graded $S$-module associated to a sheaf $\mathcal{F}$ over $X$ is defined to be
\[\Gamma_*(\mathcal{F}) = \bigoplus_{\alpha \in \Pic(X)} \Gamma(X,\mathcal{F}(\alpha))\]
(see \cite{cox}*{Theorem~3.2}). The complex $F_\bullet$ is a virtual resolution of $M$ if
\[\Gamma_*\big(\widetilde{M}\big) \cong \Gamma_*(\widetilde{\coker(\varphi_1)})\]
and for every $i \ge 1$, there is an $\ell$ such that $B^\ell H_i(F) = 0$, where again $B$ is the irrelevant ideal of $S$. 

The main result of this paper is the following theorem.
\begin{theorem}
\label{mainthm}
Let $X$ be a smooth projective toric variety with $S = \Cox(X)$. Suppose
\[F_\bullet :=  [F_0 \xleftarrow{\varphi_1} F_1 \longleftarrow \cdots \longleftarrow F_{n-1} \xleftarrow{\varphi_n} F_n \longleftarrow 0]\]
is a $\Pic(X)$-graded complex of free $S$-modules. Then $F_\bullet$ is a virtual resolution if and only if both of the following conditions are satisfied:
  \begin{enumerate}[(a)]
  \item $\rank(\varphi_{i}) + \rank(\varphi_{i+1}) = \rank(F_i)$ (with $\varphi_{n+1} = 0$),
 
  \item $\depth(I(\varphi_i):B^\infty) \ge i$
  \end{enumerate}
  for each $i = 1,2,...,n$.
\end{theorem}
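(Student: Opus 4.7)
My strategy is to reduce Theorem \ref{mainthm} to the classical Buchsbaum--Eisenbud criterion (Theorem \ref{thm:be}) by localizing at each relevant prime, exploiting that $S$, being a polynomial ring, is Cohen--Macaulay. The key reformulation is that $F$ is virtual if and only if the localized complex $F \otimes_S S_\mathfrak{p}$ is exact for every homogeneous prime $\mathfrak{p}$ with $B \not\subseteq \mathfrak{p}$. This follows because each finitely generated homology module $H_i(F)$ is annihilated by a power of $B$ precisely when its support lies in $V(B)$, equivalently when it vanishes after localization at every relevant prime.

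For the forward direction, assume $F$ is virtual and fix a relevant prime $\mathfrak{p}$. Applying Theorem \ref{thm:be} to the exact complex of free $S_\mathfrak{p}$-modules $F \otimes_S S_\mathfrak{p}$, I obtain the rank identity (independent of $\mathfrak{p}$ since $S$ is a domain, yielding (a)) together with the depth bound $\depth((I(\varphi_i))_\mathfrak{p}, S_\mathfrak{p}) \geq i$. Since $S_\mathfrak{p}$ is Cohen--Macaulay, this depth equals the height, so every prime $\mathfrak{q}$ with $I(\varphi_i) \subseteq \mathfrak{q} \subseteq \mathfrak{p}$ satisfies $\codim(\mathfrak{q}) \geq i$. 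Specializing $\mathfrak{q} = \mathfrak{p}$ and letting $\mathfrak{p}$ range over relevant primes containing $I(\varphi_i)$ shows that every such prime has codimension at least $i$. Since the primes of $S$ containing $I(\varphi_i):B^\infty$ are exactly the relevant primes containing $I(\varphi_i)$, this yields $\codim(I(\varphi_i):B^\infty) \geq i$, which equals $\depth(I(\varphi_i):B^\infty)$ by Cohen--Macaulayness of $S$.

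Conversely, assuming (a) and (b), I verify the hypotheses of Theorem \ref{thm:be} in $S_\mathfrak{p}$ for every relevant prime $\mathfrak{p}$. The rank condition localizes trivially. For the depth condition, let $\mathfrak{q} \subseteq \mathfrak{p}$ be a prime containing $I(\varphi_i)$. Because $\mathfrak{p}$ is relevant, $\mathfrak{q}$ cannot contain $B$, so $\mathfrak{q}$ contains the saturation $I(\varphi_i):B^\infty$. Condition (b) and Cohen--Macaulayness of $S$ then give $\codim(\mathfrak{q}) \geq i$, so $\codim((I(\varphi_i))_\mathfrak{p}) \geq i$, which equals $\depth((I(\varphi_i))_\mathfrak{p}, S_\mathfrak{p})$ by Cohen--Macaulayness of $S_\mathfrak{p}$. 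Theorem \ref{thm:be} then forces $F \otimes_S S_\mathfrak{p}$ to be exact for every relevant $\mathfrak{p}$, making $F$ virtual.

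The main subtlety is the bookkeeping between the saturated ideal $I(\varphi_i):B^\infty$ and the local depth conditions demanded by Theorem \ref{thm:be}. Cohen--Macaulayness of $S$ and its localizations is what reconciles these: it converts both the global depth hypothesis (b) and the local depths appearing in Buchsbaum--Eisenbud into codimension statements about primes, after which the observation that every prime $\mathfrak{q} \subseteq \mathfrak{p}$ containing $I(\varphi_i)$ is automatically relevant (given that $\mathfrak{p}$ is) lines everything up.
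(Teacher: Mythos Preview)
Your proof is correct, and the route is genuinely different from the paper's. One small imprecision: the claim that ``the primes of $S$ containing $I(\varphi_i):B^\infty$ are exactly the relevant primes containing $I(\varphi_i)$'' is not literally true, since an irrelevant prime containing $I(\varphi_i)$ also contains $I(\varphi_i):B^\infty$. What is true, and what your argument actually needs, is that the \emph{minimal} primes of $I(\varphi_i):B^\infty$ are relevant primes containing $I(\varphi_i)$; this follows from the primary decomposition of $I(\varphi_i)$ and suffices for the codimension bound.

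The paper argues the two directions quite differently from you. For sufficiency (conditions imply virtual), the paper does not invoke Theorem~\ref{thm:be} at all: it inducts on the codimension of a $B$-saturated prime $P$, uses condition (b) to guarantee $I(\varphi_{c+1}) \not\subseteq P$ at codimension $c$, splits the localized complex into an exact tail and a short head, and then applies the Peskine--Szpiro Acyclicity Lemma. For necessity, the paper proves a standalone lemma (Lemma~\ref{primepreservingdepth}) that manufactures, from $\depth(I:B^\infty)=k$, a single $B$-saturated prime $P$ with $\depth(I_P)\le k$, and then appeals to Theorem~\ref{thm:be} at that one prime. Your argument is more symmetric: you apply Theorem~\ref{thm:be} uniformly at every relevant prime and lean hard on the Cohen--Macaulayness of $S$ and $S_\mathfrak{p}$ to translate freely between depth and height, both globally for $I(\varphi_i):B^\infty$ and locally for $I(\varphi_i)_\mathfrak{p}$. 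This buys you a shorter, more unified proof that avoids Peskine--Szpiro and the auxiliary Lemma~\ref{primepreservingdepth}; the paper's approach, by contrast, isolates those two ingredients as independently interesting tools and makes less explicit use of the depth--height equality.
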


As in \cite{be}, we assign the unit ideal infinite depth, so that condition $(b)$ holds if $I(\varphi_i):B^\infty = S$, i.e., $I(\varphi_i)$ is irrelevant. The difference between Theorem~\ref{thm:be} and Theorem~\ref{mainthm} is the replacement of exactness with virtuality and the addition of the saturation of ideals of minors by the irrelevant ideal $B$. Of course, any complex of graded free $S$-modules that is exact will also be a virtual resolution. Further, if a complex is exact, then the conditions of Theorem~\ref{mainthm} will be satisfied by Theorem~\ref{thm:be}. On the other hand, below is an example of a complex that is a virtual resolution but is not exact.

\begin{example}
  Let $X = \PP^1 \times \PP^2$ so that $S = \kk[x_0,x_1,y_0,y_1,y_2]$ with $\deg(x_i) = (1,0)$ and $\deg(y_i) = (0,1)$. Then the irrelevant ideal $B$ is  $\langle x_0,x_1\rangle \cap \langle y_0, y_1, y_2 \rangle$. Let $I$ be the following $B$-saturated ideal of 4 points:
  \[\begin{matrix} \langle
    {y}_{0} {y}_{1}-22 {y}_{1}^{2}-20 {y}_{0}
      {y}_{2}-26 {y}_{1} {y}_{2}-6 {y}_{2}^{2}, \\{x}_{0}
      {y}_{0}+43 {x}_{0} {y}_{2}-13 {x}_{1} {y}_{0}+50 {x}_{1}
      {y}_{1}+20 {x}_{1} {y}_{2},\\
      {x}_{0} {y}_{1}+10 {x}_{0} {y}_{2}-{x}_{1}
      {y}_{0}-19 {x}_{1} {y}_{1}+47 {x}_{1} {y}_{2}, \\{y}_{0}^{2}-20
      {y}_{1}^{2}-14 {y}_{0} {y}_{2}-22 {y}_{1} {y}_{2}+33
      {y}_{2}^{2},\\
      {x}_{0}^{2} {y}_{2}-42 {x}_{0}
      {x}_{1} {y}_{2}+42 {x}_{1}^{2} {y}_{0}-48 {x}_{1}^{2}
      {y}_{1}+13 {x}_{1}^{2} {y}_{2}, \\{x}_{0}^{4}+27 {x}_{0}^{3}
      {x}_{1}+25 {x}_{0}^{2} {x}_{1}^{2}-29 {x}_{0} {x}_{1}^{3}-40
      {x}_{1}^{4}
\rangle. \end{matrix}\]

The minimal free resolution of $S/I$ is included below, calculated using \textit{Macaulay2} \cite{M2}.
\[S \longleftarrow \begin{matrix} S(-1,-1)^2 \\ \oplus \\ S(0,-2)^2 \\ \oplus \\ S(-2,-1) \\ \oplus \\ S(-4,0) \end{matrix}
  \longleftarrow \begin{matrix} S(-1,-2)^2 \\ \oplus \\ S(-2,-2)^3 \\ \oplus \\ S(-1,-3)^2 \\ \oplus \\ S(0,-4) \\ \oplus \\ S(-1,-4)^3 \end{matrix}
  \longleftarrow \begin{matrix} S(-2,-3)^3 \\ \oplus \\ S(-1,-4)^2 \\ \oplus \\ S(-4,-2)^3 \end{matrix}
  \longleftarrow \begin{matrix} S(-2,-4) \\ \oplus \\ S(-4,-3) \end{matrix} \longleftarrow 0. \]
  
\cite{bes}*{Theorem~1.3} states that if $X = \PP^{n_1} \times \PP^{n_2} \times \cdots \times \PP^{n_r}$, the $S = \Cox(X)$-ideal $I$ is $B$-saturated (as it is in this example), and $\vec{d}$ is an an element in the multigraded Castelnuovo--Mumford regularity of $S/I$ (see \cite{maclagan-smith} for a detailed introduction to multigraded regularity), then the chain complex consisting of all twists less than or equal to $\vec{d} + (n_1,n_2,...,n_r)$ is a virtual resolution. They call this the \textbf{virtual resolution of the pair} $(S/I, \vec{d} \;)$. In this example, $(1,1)$ is in the multigraded regularity of $S/I$ and the virtual resolution of the pair $(S/I,(1,1))$ is shown below. 

\[S \xleftarrow{\varphi_1} \begin{matrix} S(-1,-1)^2 \\ \oplus \\ S(0,-2)^2 \\ \oplus \\ S(-2,-1) \end{matrix}
  \xleftarrow{\varphi_2} \begin{matrix} S(-1,-2)^2 \\ \oplus \\ S(-2,-2)^3 \\ \oplus \\ S(-1,-3)^2 \end{matrix}
  \xleftarrow{\varphi_3} \begin{matrix} S(-2,-3)^3 \end{matrix} \longleftarrow 0.\]

  Notice that this virtual resolution is both shorter and thinner than the minimal free resolution. This virtual resolution has nonzero first homology module $S/ \langle x_0,x_1 \rangle$, which is annihilated by $B$. Consequently, this virtual resolution is not exact. Calculating the depth of the (non-$B$-saturated) ideal of minors of each differential yields
  \[\depth(I(\varphi_1)) = 3, \hspace{1cm} \depth(I(\varphi_2)) = 2, \hspace{1cm} \depth(I(\varphi_3)) = 2,\]
  indicating again that this virtual resolution is not exact, because the depth of $I(\varphi_3)$ is less than three. Though the complex is not exact, $\depth(I(\varphi_3):B^\infty) = 3$, so it is indeed a virtual resolution.
\end{example}

\begin{remark}
In the special case of the smooth projective toric variety $\PP_{\kk}^N$, the Cox ring is $S = \kk[x_0,x_1,...,x_N]$ with the standard grading, and the irrelevant ideal is the maximal homogeneous ideal $B = \mathfrak{m} = \langle x_0,x_1,...,x_N \rangle$. If the length of the complex $F$ is not longer than $N+1$ (the bound from the Hilbert Syzygy Theorem), then Theorem~\ref{mainthm} recovers Theorem~\ref{thm:be}. This is because each $I(\varphi_i)$ is homogeneous (in fact, every minor of $\varphi_i$ is homogeneous), and for any homogeneous ideal $I \subsetneq \mathfrak{m}$,
  \[\depth(I) \le \depth(I:\mathfrak{m}^\infty) \le \depth(\sqrt{I}:\mathfrak{m}^\infty) = \depth(\sqrt{I}) = \depth(I), \]
  where the second to last equality follows from the fact that any homogeneous radical ideal other than $\mathfrak{m}$ is $\mathfrak{m}$-saturated. Therefore, condition $(b)$ in Theorem ~\ref{mainthm} becomes
  \[i \le \depth(I(\varphi_i):B^\infty) = \depth(I(\varphi_i)), \]
  which exactly matches condition $(b)$ in Theorem~\ref{thm:be}.
\end{remark}

As condition $(a)$ is the same in the two theorems, we have thus proved the following proposition. A slightly different proof is offered below.

\begin{proposition}\label{virtualexactnessprop}
  Let $S = \Cox(\PP^N_\kk) = \kk[x_0,x_1,..,x_N]$. If
\[F_\bullet :=  \left[F_0 \xleftarrow{\varphi_1} F_{1} \xleftarrow{\varphi_2} F_2 \xleftarrow{\varphi_3} \cdots \xleftarrow{\varphi_m} F_m \longleftarrow 0 \right] \]
  is a virtual resolution with $m \le N+1$, then $F_\bullet$ is exact.

  \begin{proof}
    The irrelevant ideal is $B = \langle x_0, x_1, ..., x_N \rangle$. Since $F_\bullet$ is a virtual resolution, all of its homology modules are annihilated by a power of $B$. Assume $F_\bullet$ has a nonzero homology module and let $H := H_i(F_\bullet)$ where $i$ is the largest index with $H_i(F_\bullet)$ nonzero. The Peskine--Szpiro Acyclicity Lemma \cite{ps}*{Lemma~1.8} guarantees $\depth(H) \ge 1$ and hence $B$ has a nonzerodivisor on $H$, a contradiction to the assumption that $F_\bullet$ is a virtual resolution. Therefore, in this setting, if $F_\bullet$ is a virtual resolution, it must be exact.
  \end{proof}
\end{proposition}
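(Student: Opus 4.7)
My plan is to argue by contradiction using the Peskine--Szpiro Acyclicity Lemma, exactly as the theorem's hypotheses seem to invite. The first observation is that being a virtual resolution means every homology module $H_i(F)$ is annihilated by some power of $B$. Since each $H_i(F)$ is finitely generated over $S$, this forces every element of $B$ to act as a zerodivisor on any nonzero $H_i(F)$, so $\depth_B(H_i(F)) = 0$ whenever $H_i(F) \ne 0$. This reduction is what makes the acyclicity criterion available: virtuality is converted into a depth-zero statement about the hypothetical nonvanishing homology.

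Next I would suppose for contradiction that $F$ is not exact and let $i \ge 1$ be the largest index with $H := H_i(F) \ne 0$. Because $S = \kk[x_0,\ldots,x_n]$ is Cohen--Macaulay of depth $n+1$, each free module $F_j$ in the complex has depth equal to $n+1$, and the length bound $m \le n+1$ translates this into $\depth(F_j) \ge j$ for every $j$ appearing in $F$. These are precisely the hypotheses needed to apply the Peskine--Szpiro Acyclicity Lemma to the complex $F$; its conclusion for the top nonvanishing homology is that $\depth(H) \ge 1$, i.e.\ $B$ contains a nonzerodivisor on $H$.

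This last conclusion directly contradicts the first observation that $B$ consists of zerodivisors on $H$, so no such $i$ exists and $F$ is exact. The only step that deserves a careful check, and which I expect to be the main (very minor) obstacle, is confirming that the Acyclicity Lemma as stated in \cite{ps} applies in the $\Pic(X)$-graded setting over the polynomial ring, rather than just in the local setting. This is routine: all modules in sight are finitely generated and graded, $B$ is the maximal homogeneous ideal of $S$, and depth relative to $B$ agrees with the usual depth after passage to the maximal homogeneous ideal, so the local statement transfers directly. Once this is noted, the argument is complete.
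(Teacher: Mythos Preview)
Your argument is correct and follows exactly the same route as the paper's proof: pick the top nonzero homology, apply the Peskine--Szpiro Acyclicity Lemma to force $\depth(H)\ge 1$, and contradict the fact that virtuality makes $H$ annihilated by a power of $B$. If anything, you are more explicit than the paper in spelling out why the hypotheses of the Acyclicity Lemma hold (via $m\le n+1$ and the Cohen--Macaulay depth of $S$) and in noting the graded-versus-local passage.
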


If the length of $F_\bullet$ is larger than $N+1$, then virtual resolutions do not need to be exact. An example is illustrated below.

\begin{example}
  Let $X = \PP^2$ and $S = \kk[x,y,z]$. Let $G_\bullet$ be the Koszul complex on $(x,y,z)$.
\[G_\bullet := \left[ S \longleftarrow S(-1)^3 \longleftarrow S(-2)^3 \longleftarrow S(-3) \longleftarrow 0 \right]. \]
  Set $F_\bullet = G_\bullet \oplus G_\bullet[-3]$. That is,
 \[F_\bullet := \left[ S \longleftarrow S(-1)^3 \longleftarrow S(-2)^3 \longleftarrow \begin{matrix} S(-3) \\ \oplus \\ S \end{matrix}
      \longleftarrow S(-1)^3 \longleftarrow S(-2)^3 \longleftarrow S(-3) \longleftarrow 0 \right]. \]
  Then the length of $F_\bullet$ is $6 > 3$ and $F_\bullet$ is a virtual resolution as the only nonzero higher homology module is $H_3(F_\bullet) = \kk$ which is annihilated by the irrelevant ideal $\langle x, y, z \rangle$.
\end{example}

\subsection*{Outline}
Section~\ref{sec:background} lays the groundwork for the rest of the paper. This includes notation, relevant definitions, and some preliminary facts about $B$-saturated homogeneous prime ideals. These primes are important, because the homogeneous localization of a module at a $B$-saturated homogeneous prime corresponds to taking the stalk of a sheaf over the toric variety $X$. Section~\ref{sec:proof} contains the proof of Theorem~\ref{mainthm}. In Section~\ref{sec:fitt}, the invariance of saturated Fitting ideals of virtual presentations is presented along with an obstruction to the number of generators of a module up to saturation. Finally, Section~\ref{sec:fittresults} contains a connection between saturated Fitting ideals and locally free sheaves. It ends with a useful result concerning unbounded virtual resolutions.  The results in Sections~\ref{sec:fitt} and~\ref{sec:fittresults} can be used to prove the reverse direction of Theorem~\ref{mainthm} in the same way that results of Fitting ideals can be used to prove Theorem~\ref{thm:be}.

\subsection*{Acknowledgements}
The author is incredibly grateful to Christine Berkesch for her guidance while this work was conducted. He would also like to thank Daniel Erman and Jorin Schug for helpful conversations and Patricia Klein, Gennady Lyubeznik, and Robert Walker for suggestions that improved the readability of this note. Finally, he also thanks an anonymous referee for a suggestion of a strengthening of Lemma~\ref{primepreservingdepth} that resulted in a simpler proof of Theorem~\ref{mainthm}. The author was partially supported by the National Science Foundation RTG grant DMS-1745638.

\section{$B$-Saturated Prime Ideals}
\label{sec:background}
In this section the structure of $B$-saturated prime ideals in the Cox ring $S$ is discussed, which will aid in later proofs. Indeed, in order to show that a complex is a virtual resolution, we will show that after sheafifying, the complex of vector bundles is acyclic by showing it is exact in each place. The latter will be true if and only if the stalk at each $B$-saturated homogeneous prime is exact.

Recall the {\bf{saturation of an ideal $I$ by $B$} } is
\[I:B^\infty = \bigcup_{n \ge 0} (I:B^n) = \{s \in S \; |\;  sB^n \subset I \text{ for some } n\}. \]
There is a correspondence between $B$-saturated ideals of $S$ and closed subschemes of $X$ \cite{CLS}*{Proposition~6.A.7}.

Throughout the paper we will often be concerned with the structure of the homogeneous $B$-saturated prime ideals of $S$, which form a proper subset of the homogeneous prime ideals of $S$. It will be important to see which prime ideals are $B$-saturated. 

\begin{proposition}[\cite{am}*{Exercise 1.12}]
  \label{bsatprimesprop}
  Suppose $P$ is a homogenous prime ideal of $S$. Then either $P$ is $B$-saturated or a prime component of $B$ is contained in $P$, in which case $P:B^\infty = S$.
\end{proposition}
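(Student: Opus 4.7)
The plan is to extract the dichotomy by analyzing the failure of $B$-saturation. If $P$ is already $B$-saturated, we are done, so I would assume $P:B^\infty \supsetneq P$ and show that a prime component of $B$ must lie in $P$, and that $P:B^\infty = S$.

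First, pick a homogeneous $s \in (P:B^\infty) \setminus P$, so that $sB^n \subseteq P$ for some $n \ge 1$. Since $P$ is prime and $s \notin P$, this forces $B^n \subseteq P$. A second application of primality gives $B \subseteq P$. Next, because $X$ is smooth (or more generally simplicial), $B$ is a squarefree monomial ideal in $S$, hence radical, and has finitely many minimal primes $Q_1, \dots, Q_r$, each of the form $\langle x_\rho : \rho \in I_\sigma \rangle$. From $Q_1 Q_2 \cdots Q_r \subseteq Q_1 \cap \cdots \cap Q_r = B \subseteq P$ and the primality of $P$, some $Q_i$ is contained in $P$; this is the prime component of $B$ demanded by the statement.

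For the final clause, once $B \subseteq P$, every $s \in S$ satisfies $sB \subseteq P$ (for any $b \in B$ we have $b \in P$, hence $sb \in P$), so $s \in P:B \subseteq P:B^\infty$. Thus $P:B^\infty = S$.

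The only point requiring care is the initial passage from "$P:B^\infty \ne P$" to "$B \subseteq P$", since everything else is formal. That step is really just primality used twice, so I do not expect any serious obstacle; the argument should be short and self-contained, relying only on the primality of $P$, the standard lemma that a prime containing a product of ideals contains one of the factors, and the fact that the irrelevant ideal $B$ of a smooth toric variety is a squarefree monomial ideal.
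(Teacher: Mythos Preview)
Your argument is correct. Both your proof and the paper's hinge on the same primality trick---if $s \notin P$ and $sJ \subseteq P$ then $J \subseteq P$---but you organize it differently. The paper first decomposes $B = \bigcap_i Q_i$, invokes the iterated colon identity $P:B^\infty = (\cdots((P:Q_1^\infty):Q_2^\infty)\cdots:Q_m^\infty)$, and then shows that $P:Q^\infty = P$ whenever $Q \not\subseteq P$; this computes $P:B^\infty$ step by step. You instead argue the contrapositive directly on $B$: a witness $s \in (P:B^\infty)\setminus P$ forces $B^n \subseteq P$, hence $B \subseteq P$, and only then do you decompose to locate a $Q_i \subseteq P$. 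Your route is slightly leaner since it avoids the iterated saturation formula, while the paper's version makes the mechanism $P:Q_i^\infty = P$ explicit for each component, which is conceptually useful elsewhere. Either way the content is the same.
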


\begin{proof}
  We first prove the second statement. Let $B = \bigcap_{i=1}^m Q_i$ with each $Q_i$ a homogeneous prime ideal. Then
  \[P:B^\infty = (\cdots((P:Q_1^\infty):Q_2^\infty): \cdots:Q_m^\infty).\]
  Now if $P \supset Q_i$, then $P:Q_i^\infty = P:Q_i = S$. This proves the second part of the proposition.

In order to show the first part, it suffices to show that if $P$ and $Q$ are prime ideals so that $P$ does not contain $Q$, then $P:Q^\infty = P$. Suppose $s \in P:Q^\infty$ and let $\ell \in \ZZ_{\ge 0}$ be such that $sQ^{\ell} \subset P$. Since $Q \not\subset P$, there is a $q \in Q -P$. Thus $q^{\ell} \in Q^{\ell} -P$, but $sq^{\ell} \in P$ so $s \in P$. 
\end{proof}

This first proposition says that every homogenous prime ideal of $S$ is either $B$-saturated or irrelevant. The lemma below implies we need only consider prime ideals of small enough height.

\begin{lemma}
  \label{bsatprimeslemma}
  If $I$ is a homogeneous ideal of codimension greater than $\dim X$, then $I:B^\infty = S$.
\end{lemma}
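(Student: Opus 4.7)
The plan is to reduce the lemma to a statement about homogeneous primes. Since $S$ is Noetherian and $B$ is finitely generated, $I:B^\infty = S$ is equivalent to $B \subseteq \sqrt{I}$, i.e.\ every minimal prime of $I$ contains $B$. Because a minimal prime $P$ of $I$ satisfies $\codim P \geq \codim I > \dim X$, it suffices to show: every homogeneous prime $P$ of $S$ with $\codim P > \dim X$ contains $B$.

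I would establish the contrapositive using the Cox construction. Suppose $B \not\subseteq P$; then some monomial generator $x^{\hat\sigma}$ of $B$, indexed by a maximal cone $\sigma$ of the fan, lies outside $P$. Writing $U := \Spec S \setminus V(B)$ for the open complement of the irrelevant locus, it follows that $Y := \Spec(S/P) \subseteq \Spec S$ meets $U$. Since $X$ is smooth projective toric, $U \to X$ is a principal $G$-bundle for the torus $G = \Hom(\Cl(X), \kk^*)$ of dimension $r := \rank \Cl(X) = \dim S - \dim X$. Because $P$ is $\Pic(X)$-homogeneous, $Y$ is $G$-invariant, so $Y \cap U$ is a nonempty $G$-invariant open subset of $Y$ that descends to a nonempty closed subvariety $Z \subseteq X$, and the principal-bundle structure gives
\[\dim Y = \dim Z + r \geq r,\]
whence $\codim P \leq \dim S - r = \dim X$.

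Combining this bound with the initial reduction proves $B \subseteq \sqrt{I}$, and therefore $I:B^\infty = S$.

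I anticipate the main obstacle will be making the dimension count rigorous at the Cox quotient — specifically, the freeness of the $G$-action on $U$ (which uses smoothness of $X$) that upgrades the geometric quotient $X = U/G$ to a principal bundle with $r$-dimensional fibers. A more elementary alternative is to localize at $x^{\hat\sigma}$ and examine the degree-zero subring realizing the affine open $U_\sigma \subseteq X$, recovering the same dimension inequality, but this route requires additional bookkeeping.
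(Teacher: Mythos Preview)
Your proof is correct and uses essentially the same approach as the paper: both invoke the Cox quotient $U = \mathbb{A}^n \setminus V(B) \to X$ and the freeness of the $G$-action on $U$ (which relies on smoothness of $X$) to bound $\dim V(I)$ from below by $r = n - \dim X$, yielding $\codim I \le \dim X$. The only cosmetic difference is that you first reduce to homogeneous primes and then descend $V(P)\cap U$ along the principal bundle, whereas the paper works directly with $I$ and a single $G$-orbit inside $V(I)$; the dimension count is the same.
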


\begin{proof}
  Homogeneous ideals that saturate to all of $S$ are ideals that correspond to the empty subvariety of $X$. It is enough to show that if $I$ corresponds to a nonempty subvariety of $X$, then the height of $I$ is less than or equal to the dimension of $X$. Let $d := \dim X$, $c := \codim I$, and $N$ be the number of variables of the polynomial ring $S$. Suppose $x$ is in the subvariety of $X$ corresponding to $I$. Considering the quotient construction of $X$, there is a torus $G$ that acts on $N$-dimensional affine space $\mathbb{A}^N$. As $G$ acts freely on $\mathbb{A}^N \backslash V(B)$ \cite{CLS}*{Exercise~5.1.11}, the dimension of both $G$ and the orbit $Gx$ in $\mathbb{A}^N$ is $N-d$. Then $Gx \subset V(I) \subset \mathbb{A}^N$ implies that $c \le d$, since
  \[N-d = \dim(Gx) \le \dim V(I) = N - c. \qedhere \]
\end{proof}

Therefore, when considering homogeneous $B$-saturated primes of the Cox ring $S$ of $X$, we need only consider the homogeneous primes of codimension at most the dimension of $X$ that do not contain any prime components of the irrelevant ideal $B$.

\section{When Complexes are Virtual Resolutions}
\label{sec:proof}

In this section Theorem~\ref{mainthm} is proved. The proof relies on a lemma relating the minimum of the depths of homogeneous localization of an ideal with depth of the ideal after saturation by the irrelevant ideal $B$. There is some care that must be taken in examining degree zero component of the localization of the ideal. Fortunately, Proposition~\ref{prop:generatePic} guarantees that every element of the localization can be written as the product of a unit and a degree zero element so $S_P$ and $S_{(P)}$ may be viewed as ``the same up to units.'' In order to prove the proposition, we use the combinatorial structure of the smooth toric variety and exploit a fact about determinants of two maps in a short exact sequence.

Let $\Sigma$ be the complete fan of the smooth projective toric variety $X$, and $\sigma$ denote a cone in $\Sigma$. Then as in \cite{cox}, let
\[x^{\hat{\sigma}} = \prod_{\rho \notin \sigma} x_\rho, \]
where $x_\rho$ is the variable of the Cox ring corresponding to the ray $\rho \in \Sigma$ (see \cite{CLS}*{Section 5.2} for a more detailed exposition).
The irrelevant ideal $B$ is generated by the monomials $x^{\hat{\sigma}}$ as $\sigma$ ranges over the maximal cones of $\Sigma$.

\begin{proposition} \label{prop:generatePic}
  Let $X$ be a smooth projective toric variety. Let $\sigma$ be a maximal cone of the fan of $X$. The elements of $\Pic(X)$, $\{\deg(x_\rho) | \rho \notin \sigma\}$ generate the Picard group.
\end{proposition}

\begin{proof}
  Consider the short exact sequence
  \[0 \rightarrow L \xrightarrow{\; A \;} \Div_{T_N}(X) \xrightarrow{\; B \;} \Pic(X) \rightarrow 0\]
  of \cite{cox}*{Thm 4.3}, where $L$ is the lattice of characters of $X$ and $\Div_{T_N}(X)$ is the group of torus invariant Weil divisors of $X$. Each of $L$, $\Div_{T_N}$, and $\Pic(X)$ are free abelian groups. Here the matrix $A$ contains the rays of $\Sigma$ as rows. Since $\sigma$ is a smooth cone, the determinant of the rows corresponding to the rays of $\sigma$ is $\pm 1$. Number these rows $i_1,i_2,...,i_r$. Since the image of $B$ is the cokernel of $A$, the determinant of $B$ obtained by omitting columns $i_1,i_2,...,i_r$ also has determinant $\pm 1$. The columns we have omitted correspond to rays in $\sigma$ so omitting these columns corresponds to rays not in $\sigma$. Therefore, the degrees of the $\{x_\rho\}_{\rho \notin \sigma}$ generate $\Pic(X)$.
\end{proof}

\begin{lemma}
  \label{primepreservingdepth}
  If $I$ is a homogeneous ideal of $S$ with $I:B^\infty \ne S$, then
  \[\depth(I:B^\infty) = \min\{\depth(I_{(P)})| P \text{ is a }B\text{-saturated prime ideal of }S\}.\]
\end{lemma}

\begin{proof}
 First we claim $\depth(I_P) = \depth(I_{(P)})$ for $B$-saturated homogeneous primes $P$. As $P$ is $B$-saturated, it does not contain $x^{\hat{\sigma}}$ for some maximal cone $\sigma$. Letting $x^{\hat{\sigma}} = \prod_{\rho \notin \sigma} x_\rho$, by Proposition \ref{prop:generatePic}, $\{\deg(x_\rho)\}_{\rho \notin \sigma}$ generate $\Pic(X)$. So if $f$ is a homogeneous element of $S_P$, then there is a unit $u \in S_P$ so that $fu \in S_{(P)}$. As multiplying by a unit does not change whether or not an element is a zero divisor, this proves that $\depth(I_P) = \depth(I_{(P)})$. Then as $S$ is Cohen--Macaulay,
  \begin{align*}
    \min\{\depth(I_{P})| P \text{ is a }&B\text{-saturated prime ideal of }S\}  \\
    &= \min\{\codim(I_{P})| P \text{ is a }B\text{-saturated prime ideal of }S\} \\
    &= \min\{\codim P | I \subset P\} \\
    &= \min\{\codim P | I:B^\infty \subset P\} \\
    &= \codim(I:B^\infty) \\
    &= \depth(I:B^\infty). \qedhere
  \end{align*}
\end{proof}

\begin{proof}[Proof of Theorem \ref{mainthm}]
  Suppose first that $F_\bullet$ is a virtual resolution. Then as $\widetilde{F_\bullet}$ is exact, it is also true that $(F_\bullet)_{(P)}$ is exact for every $B$-saturated prime ideal $P$. In particular, it is true for the zero ideal, so $(F_\bullet)_{(\langle 0 \rangle)}$ is an exact sequence of vector spaces and condition $(a)$ is satisfied. Further, by Lemma \ref{primepreservingdepth}
  \[\depth(I(\varphi_i):B^\infty) = \min\{\depth(I(\varphi_i)_{(P)})| P \text{ is a }B\text{-saturated prime ideal of }S\},\]
  and  by Theorem \ref{thm:be}
  \[\min\{\depth(I(\varphi)_{(P)})| P \text{ is a }B\text{-saturated prime ideal of }S\} \ge i.\]

  Conversely, suppose conditions $(a)$ and $(b)$ are satisfied. We use a similar strategy and show that $\widetilde{F_\bullet}$ is exact by showing that $(F_\bullet)_{(P)}$ is exact for every $B$-saturated prime ideal $P$. Proposition \ref{prop:generatePic} implies that after homogeneous localization, $(F_\bullet)_{(P)}$ has the same ranks as $F_\bullet$ in every homological degree so condition $(a)$ of Theorem \ref{thm:be} is satisfied. By Lemma \ref{primepreservingdepth}, $\depth(I(\varphi_i)_{(P)}) \ge \depth(I(\varphi):B^\infty) \ge i$, so condition $(b)$ is also satisfied, proving the $(F_\bullet)_{(P)}$ is exact. Therefore $F_\bullet$ is a virtual resolution.
\end{proof}

\begin{remark}
  The same proof as above with ``homogenous'' omitted everywhere shows that in the setup of a Cohen--Macaulay ring $R$, a radical ideal $J$, and a bounded free chain complex of $R$ modules
\[F_\bullet :=  [F_0 \xleftarrow{\varphi_1} F_1 \longleftarrow \cdots \longleftarrow F_{n-1} \xleftarrow{\varphi_n} F_n \longleftarrow 0]\]
  then $F_\bullet$ satisfies $\rank(\varphi_i) + \rank(\varphi_{i+1}) = \rank(F_i)$ and $\depth(I(\varphi_i):J^\infty) \ge i$ for every $i = 1,2,...,n$ if and only if $F_\bullet$ has homology only possibly supported on $J$. That is, the complex of $\cO_{\Spec R}$-modules $\widetilde{F_\bullet}$ is exact on the open subscheme $\Spec R - V(J)$.
\end{remark}

\section{Invariance of Saturated Fitting Ideals}
\label{sec:fitt}
Between this section and Section~\ref{sec:fittresults}, we record the groundwork of tools for an alternative proof of the reverse direction of Theorem~\ref{mainthm}, using $B$-saturated Fitting ideals in the Cox ring $S$ of a smooth projective toric variety $X$. Here, we present some facts about the $B$-saturation of Fitting ideals of an $S$-module. The Fitting ideals of a finitely-generated module $M$ over a Noetherian ring can be calculated by a free presentation
\[F \xrightarrow{\; \varphi \;} G \longrightarrow M \longrightarrow 0.\]
Let $r$ denote the rank of $G$. The {\textbf{$i$th Fitting ideal of $M$}}, $\Fitt_i(M)$, is defined to be $I_{r-i}(\varphi)$. This ideal is independent of the free presentation \cite{Fitting}.

We now adapt this idea of Fitting ideals to virtual presentations of a $\Pic(X)$-graded $S$-module $M$, beginning with the invariance of the $B$-saturated Fitting ideals. We then produce further facts about saturated Fitting ideals that mirror the classical theory of Fitting ideals.

Let a chain complex
\[F \xrightarrow{\; \varphi \;} G \longrightarrow M \longrightarrow 0\]
be called a \textbf{virtual presentation} if both $F$ and $G$ are free $S$-modules and $\widetilde{\coker(\varphi)} \cong \widetilde{M}$.

\begin{lemma}[Saturated Fitting's Lemma]
  \label{satinv}
  Suppose $X$ is a smooth projective toric variety and $S$ is the Cox ring of $X$ with irrelevant ideal $B$. Let
  \[F \xrightarrow{\phantom{X}\varphi} G \longrightarrow M \longrightarrow 0 \hspace{1cm} \text{ and } \hspace{1cm} F' \xrightarrow{\phantom{X}\varphi'} G' \longrightarrow M' \longrightarrow 0\]
  be finite virtual presentations of $M$ and $M'$, respectively, with $G$ and $G'$ of ranks $r$ and $r'$. If $\widetilde{M} \cong \widetilde{M'}$, then $I_{r-i}(\varphi):B^\infty = I_{r'-i}(\varphi'):B^\infty$ for every $i \in \ZZ_{\ge 0}$.
\end{lemma}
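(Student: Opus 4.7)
The strategy is to reduce to the classical Fitting's Lemma applied to the common coherent sheaf $\widetilde{M} \cong \widetilde{M'}$ on $X$, and then transfer the resulting equality of ideal sheaves back to $S$ via the bijection between $B$-saturated homogeneous ideals and closed subschemes of $X$ from \cite{CLS}*{Proposition~6.A.7}.

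First I would sheafify both virtual presentations. By Definition~\ref{def:vres}, the sequences $\widetilde{F} \xrightarrow{\widetilde{\varphi}} \widetilde{G} \to \widetilde{M} \to 0$ and $\widetilde{F'} \xrightarrow{\widetilde{\varphi'}} \widetilde{G'} \to \widetilde{M'} \to 0$ are exact with $\widetilde{F},\widetilde{G},\widetilde{F'},\widetilde{G'}$ locally free. Combined with the isomorphism $\widetilde{M} \cong \widetilde{M'}$, this exhibits a single coherent sheaf as the cokernel of two different maps of locally free sheaves. I would then invoke the classical Fitting's Lemma in the coherent sheaf setting: the image ideal sheaf of the map $\bigwedge^{r-i}\widetilde{F} \otimes (\bigwedge^{r-i}\widetilde{G})^{*} \to \mathcal{O}_X$ depends only on $\widetilde{M}$ and $i$, not on the chosen locally free presentation. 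Since exterior powers, duals, tensor products, and images all commute with sheafification on $X$, this image sheaf coincides with $\widetilde{I_{r-i}(\varphi)}$, and hence $\widetilde{I_{r-i}(\varphi)} = \widetilde{I_{r'-i}(\varphi')}$ as subsheaves of $\mathcal{O}_X$.

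To translate this back to $S$, I would use that $\widetilde{J} = \widetilde{J:B^\infty}$ for every homogeneous ideal $J$, and that by \cite{CLS}*{Proposition~6.A.7} a $B$-saturated homogeneous ideal is uniquely determined by its sheafification. Applying this correspondence to both $I_{r-i}(\varphi)$ and $I_{r'-i}(\varphi')$ yields $I_{r-i}(\varphi):B^\infty = I_{r'-i}(\varphi'):B^\infty$.

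The main obstacle I foresee is the middle step: cleanly identifying the sheafification $\widetilde{I_{r-i}(\varphi)}$ with the intrinsic $i$-th Fitting ideal sheaf of $\widetilde{M}$ when $\varphi$ is only a virtual presentation of $M$ rather than a genuine one. The cleanest remedy is to verify the identification on the affine toric charts of $X$ indexed by maximal cones of the fan, where $\Gamma_*$ coincides with homogeneous localization of the $S$-module and the classical Fitting's Lemma over a Noetherian ring applies verbatim; a second technicality to handle along the way is the degenerate case in which either Fitting ideal saturates to the unit ideal, which simply corresponds on $X$ to the empty closed subscheme and is compatible with the ideal-sheaf formalism.
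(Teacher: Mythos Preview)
Your proposal is correct and follows essentially the same route as the paper: reduce the statement to an equality of ideal sheaves $\widetilde{I_{r-i}(\varphi)} = \widetilde{I_{r'-i}(\varphi')}$, verify this equality locally on affine opens by applying the classical Fitting's Lemma over a Noetherian ring, and then translate back via the bijection between $B$-saturated homogeneous ideals and their associated sheaves. The only cosmetic difference is that the paper first replaces the virtual presentations by genuine free presentations (passing to the minimal free resolution of the $B$-saturation of $M$ and to $\coker\varphi'$ for $M'$) before localizing at distinguished opens $D(f)$, whereas you sheafify first and then restrict to affine toric charts; both devices serve the same purpose of producing honest presentations on which classical Fitting applies, and your anticipated ``obstacle'' and its remedy are exactly what the paper carries out.
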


\begin{proof}
  We may harmlessly assume that $M$ is already $B$-saturated and that
  \[F \xrightarrow{\; \varphi \;} G \longrightarrow M \longrightarrow 0\]
  is the truncation of the minimal free resolution of $M$. By replacing $M'$ with $\coker \varphi'$ if necessary, we may also assume that
  \[F' \xrightarrow{\; \varphi' \;} G' \longrightarrow M' \longrightarrow 0\]
  is a free presentation of $M'$.

  Now $I_{r-i}(\varphi):B^\infty = I_{r'-i}(\varphi'):B^\infty$ if and only if the sheaves $\widetilde{I_{r-i}(\varphi)}$ and $\widetilde{I_{r'-i}(\varphi')}$ are equal. We will show the equality of the sheaves by showing that they agree on an open cover of $X$ by some distinguished open affines $D(f)$. The cocycle condition is then satisfied because taking Fitting ideals commutes with base change. Given $f \in S$, $D(f)$ can be written (by abusing notation) as $\{p \in X | f(p) \ne 0\}$.

  The open cover will be by the generators $\{x^{\hat{\sigma}}\}_{\sigma \in \Sigma}$ of the irrelevant ideal. To see that this forms an open cover of $X$, begin by fixing a $B$-saturated prime $P$. We shall show that $P \in D(x^{\hat{\sigma}})$ for some $\sigma$. Notice $P \in D(x^{\hat{\sigma}})$ exactly when $x^{\hat{\sigma}} \notin P$. If $x^{\hat{\sigma}} \in P$ for every $\sigma \in \Sigma$, then $P$ contains the irrelevant ideal $B$ and is therefore not $B$-saturated.

By Proposition \ref{prop:generatePic}, homogeneous localization by $x^{\hat{\sigma}}$ preserves free modules and so 
  \[\widetilde{I_{r-i}(\varphi)}(D(x^{\hat{\sigma}})) \cong I_{r-i}(\varphi)_{(x^{\hat{\sigma}})} = I_{r-i}(\varphi_{(x^{\hat{\sigma}})}),\]
  where the last equality holds because taking Fitting ideals commutes with base change. Because localization is exact, and Proposition \ref{prop:generatePic} implies that every element of $F_{x^{\hat{\sigma}}}$ is equal to a product of a unit and an element of $F_{(x^{\hat{\sigma}})},$
  \[F_{(x^{\hat{\sigma}})} \xrightarrow{\varphi_{(x^{\hat{\sigma}})}} G_{(x^{\hat{\sigma}})} \longrightarrow M_{(x^{\hat{\sigma}})} \longrightarrow 0\]
  is a free presentation of $M_{(x^{\hat{\sigma}})}$.
  
  Furthermore, since $\widetilde{M} = \widetilde{M'}$, we have
  \[M_{(x^{\hat{\sigma}})} \cong \widetilde{M}(D(x^{\hat{\sigma}})) = \widetilde{M'}(D(x^{\hat{\sigma}})) \cong M'_{(x^{\hat{\sigma}})}.\]
  Fitting's Lemma \cite{Fitting} implies $I_{r-i}(\varphi_{(x^{\hat{\sigma}})}) = I_{r'-i}(\varphi'_{(x^{\hat{\sigma}})})$.

Thus $\widetilde{I_{r-i}(\varphi)}$ and $\widetilde{I_{r'-i}(\varphi')}$ agree on an open cover of $X$, and glue in the same way so the sheaves are equal.
\end{proof}

We will call $I_{r-i}(\varphi):B^\infty$ the \textbf{$i$th saturated Fitting ideal of $M$}. Lemma~\ref{satinv} allows us to prove that the $j$th saturated Fitting ideal not equaling $S$ is an obstruction to generating a module up to saturation by $j$ elements. It is analogous to the fact that the $j$th Fitting ideal not equaling $S$ is an obstruction to generating a module by $j$ elements in the classical theory of Fitting ideals of Noetherian rings (see \cite{eisenbud-ca}*{Proposition~20.6}).

Define $\mathbb{V}(I)$ to be the set of homogeneous $B$-saturated prime ideals $P$ of $S$ such that $P \supset I$. In particular,
\[\mathbb{V}(I) := \{P \in V(I) \; | \; P \text{ is } B\text{-saturated}\}.\]

\begin{proposition}
  \label{vfitt}
  The set $\mathbb{V}(\Fitt_j(M):B^\infty)$ consists of exactly the homogeneous $B$-saturated primes $P$ such that there does not exist an $S$-module $N$ where $N_P$ can be generated by $j$ elements and $\widetilde{M} \cong \widetilde{N}$.
\end{proposition}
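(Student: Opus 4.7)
The plan is to show the biconditional, for any homogeneous $B$-saturated prime $P$, that $\Fitt_j(M):B^\infty \subset P$ if and only if no $S$-module $N$ with $\widetilde{N} \cong \widetilde{M}$ has $N_P$ generated by $j$ elements. The approach is to combine the Saturated Fitting's Lemma (Lemma~\ref{satinv}) with the classical characterization of Fitting ideals \cite{eisenbud-ca}*{Proposition~20.6}, which says that $N_P$ is generated by $j$ elements if and only if $\Fitt_j(N) \not\subset P$.

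The first step is to record a simple observation: for any homogeneous $B$-saturated prime $P$ and any ideal $I$ of $S$, $I \subset P$ if and only if $I:B^\infty \subset P$. Indeed, ``$\Leftarrow$'' is immediate from $I \subset I:B^\infty$, and for ``$\Rightarrow$'' any $x \in I:B^\infty$ satisfies $xB^n \subset I \subset P$, so $x \in P:B^n \subset P:B^\infty = P$ by the assumption that $P$ is $B$-saturated.

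For the forward direction, I would suppose for contradiction that such an $N$ exists. Lemma~\ref{satinv} gives $\Fitt_j(N):B^\infty = \Fitt_j(M):B^\infty \subset P$, so the opening observation yields $\Fitt_j(N) \subset P$. On the other hand, the classical characterization applied to $N_P$ forces $\Fitt_j(N) \not\subset P$, a contradiction. For the reverse direction, I would simply take $N = M$: the observation converts the hypothesis $\Fitt_j(M):B^\infty \not\subset P$ into $\Fitt_j(M) \not\subset P$, and the classical characterization then guarantees $M_P$ is generated by $j$ elements.

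The main obstacle I anticipate is really only the formulation of the initial observation, which is essentially a definitional unwinding of $B$-saturation. Once it is in place, both directions follow immediately from Lemma~\ref{satinv} and the classical theory, and the reverse direction requires no construction beyond taking $N = M$.
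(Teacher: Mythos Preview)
Your proof is correct and follows essentially the same approach as the paper: both directions combine Lemma~\ref{satinv} with the classical result \cite{eisenbud-ca}*{Proposition~20.6}, and the reverse direction simply takes $N=M$. The only cosmetic difference is that you isolate the equivalence $I\subset P \Longleftrightarrow I:B^\infty\subset P$ for $B$-saturated primes as a preliminary observation, whereas the paper carries out this reasoning inline in each direction.
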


\begin{proof}
  Suppose $P \in \mathbb{V}(\Fitt_j(M):B^\infty)$ and $N$ is an module such that the sheaves $\widetilde{M}$ and $\widetilde{N}$ are isomorphic. Then by Lemma~\ref{satinv}, for every $j \in \ZZ_{\ge 0}$, $\Fitt_j(M):B^\infty = \Fitt_j(N):B^\infty$. So
  \[\Fitt_j(N) \subset \Fitt_j(N):B^\infty \subset P,\]
and therefore $N_P$ cannot be generated by $j$ elements \cite{eisenbud-ca}*{Proposition~20.6}.

  On the other hand, suppose $P$ is a homogeneous $B$-saturated prime not belonging to $\mathbb{V}(\Fitt_j(M):B^\infty)$. In this case, $\Fitt_j(M)$ cannot be contained in $P$. For if $\Fitt_j(M) \subset P$, then $\Fitt_j(M):B^\infty \subset P:B^\infty = P$. Thus $M_P$ can be generated by $j$ elements.
\end{proof}

As mentioned above, the previous proposition can be rephrased as saying that $\Fitt_j(M)$ is an obstruction to generating the sheaf $\widetilde{M}$ by $j$ elements. One may hope that the converse is true as well, that is, if $\Fitt_j(M):B^\infty = S$, then there is a module $N$ generated by $j$ elements such that $\widetilde{M} \cong \widetilde{N}$. Unfortunately, this fails to be true as shown by the following example.

\begin{example}
  Let $X = \PP^1 \times \PP^1$ so that $S = \kk[x_0,x_1,y_0,y_1]$ with $\deg(x_i) = (1,0)$ and $\deg(y_i) = (0,1)$. Consider the $B$-saturated ideal $I = \langle y_0y_1,x_0y_0,x_0x_1 \rangle$ of three points lying in the following way on a ruling of $\PP^1 \times \PP^1$.
  \begin{center}
  \begin{tikzpicture}
    \draw (-1,0) -- (3,0);
    \draw (-1,1) -- (3,1);
    \draw (-1,2) -- (3,2);
    \draw (0,-1) -- (0,3);
    \draw (1,-1) -- (1,3);
    \draw (2,-1) -- (2,3);
    \draw[fill] (0,2) circle [radius=0.1];
    \draw[fill] (1,2) circle [radius=0.1];
    \draw[fill] (0,1) circle [radius=0.1];
  \end{tikzpicture}
\end{center}
Then $S/I$ has minimal resolution
\[0 \longrightarrow \begin{matrix} S(-2,-1) \\ \oplus \\ S(-1,-2) \end{matrix}
\xrightarrow{\begin{bmatrix} -y_0 & 0 \\ x_1 & -y_1 \\ 0 & x_0 \end{bmatrix}}
\begin{matrix} S(-2,0) \\ \oplus \\ S(-1,-1) \\ \oplus \\ S(0,-2) \end{matrix}
\longrightarrow S.\]
  
The ideal $\Fitt_2(S/I)$ is generated by the entries of the matrix above, so
\[\Fitt_2(S/I):B^\infty = S.\]

However, there cannot be two homogenous polynomials of $S$ whose intersection vanishes exactly at the above three points. For if $f$ and $g$ are homogeneous forms of degree $(a,b)$ and $(c,d)$ respectively, the multigraded version of B\'{e}zout's Theorem (see example 4.9 in \cite{shaf}) says that the intersection multiplicity of $f$ and $g$ is $ad + bc$. For concreteness, the leftmost vertical line is $V(x_0)$, the middle vertical line is $V(x_1)$, the topmost horizontal line is $V(y_0)$ and the middle horizontal line is $V(y_1)$. Then there are two cases for which $ad+bc = 3$.

\textit{Case 1:} One of the terms ($ad$ or $bc$) is 3 and one is 0. Without loss of generality, assume $a = 3$, $d = 1$. If $c = 0$, then $\deg(g) = (0,1)$, which cannot vanish at all three points. Hence, $b = 0$ and $\deg(f) = (3,0)$. Plugging in $x_0 = 0$, $x_1 = 1$ gives a degree $0$ form that must vanish at $[0:1]$ and $[1:0]$ which means $f$ must vanish on all of $V(x_0)$. Similarly, plugging in $x_0 = 0$, $x_1 = 1$ to $g$ yields a degree 1 form that also vanishes at these two points, implying $g$ vanishes on all of $V(x_0)$. Therefore $V(f) \cap V(g) \ne X$.

\textit{Case 2:} One of the terms is 2, and the other is equal to 1. It suffices to assume $a = 2$ and $b = c = d = 1$. Then both $f$ and $g$ restricted to $V(x_0)$ are degree one polynomials in $y_0$ and $y_1$ that vanish at two points so again $V(x_0) \subset V(f) \cap V(g)$. Therefore, no ideal generated by two elements will saturate to the ideal $I$.
\end{example}

\section{Saturated Fitting Ideals and Locally Free Sheaves}
\label{sec:fittresults}

In the classical theory of Fitting ideals, a module is projective if and only if its first nonzero Fitting ideal is the whole ring. Sheafifying a projective module yields a locally free sheaf. The situation is similar for modules over the Cox ring and saturated Fitting ideals.

\begin{proposition}
  \label{localfree}
  If $M$ is a $\Pic(X)$-graded $S$-module, then $\Fitt_r(M):B^\infty = S$ and $\Fitt_{r-1}(M):B^\infty = 0$ if and only if the sheaf $\widetilde{M}$ is locally free of constant rank $r$.
\end{proposition}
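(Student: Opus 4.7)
The plan is to prove the equivalence by passing to an affine cover of $X$ and invoking the classical characterization of projective modules in terms of Fitting ideals. The strategy mirrors the proof of Lemma~\ref{satinv}: check everything on distinguished affine opens $D(f) \subset X$, where Fitting ideals commute with the localization $(-)_f$.

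First, I would fix an open cover of $X$ by distinguished affines $D(f_i)$. The sheaf $\widetilde{M}$ is locally free of constant rank $r$ if and only if, for each $i$, the $S_{f_i}$-module $M_{f_i}$ is projective of rank $r$ (the rank is constant since each $S_{f_i}$ is a domain). By the classical characterization of projectivity via Fitting ideals (see, e.g., \cite{eisenbud-ca}*{Proposition~20.8}), this occurs if and only if $\Fitt_r(M_{f_i}) = S_{f_i}$ and $\Fitt_{r-1}(M_{f_i}) = 0$ for every $i$. Since Fitting ideals commute with base change, these local conditions translate to the sheaf-level equalities $\widetilde{\Fitt_r(M)} = \cO_X$ and $\widetilde{\Fitt_{r-1}(M)} = 0$.

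Second, I would translate these sheaf conditions into the claimed conditions on saturated Fitting ideals, using the correspondence between $B$-saturated homogeneous ideals of $S$ and closed subschemes of $X$ from \cite{CLS}*{Proposition~6.A.7}. For any homogeneous ideal $I \subset S$, the vanishing $\widetilde{I} = \cO_X$ is equivalent to the closed subscheme cut out by $I$ being empty, and hence to $B^n \subset I$ for some $n$, which is the statement $I : B^\infty = S$. On the other hand, $\widetilde{I} = 0$ is equivalent to $I = 0$ (because $I_{f} = 0$ for some $f$ with $D(f) \neq \emptyset$ forces $I = 0$ in the integral domain $S$), and this in turn is equivalent to $I : B^\infty = 0$ (since $S$ is a domain, so $0 : B^\infty = 0$). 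Applying these equivalences to $I = \Fitt_r(M)$ and $I = \Fitt_{r-1}(M)$ yields the desired biconditional.

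The main obstacle is the first step: cleanly asserting that local freeness of $\widetilde{M}$ of constant rank $r$ is detected on an affine cover by the projectivity and rank of the $M_{f_i}$. This is essentially standard once one commits to the sheafification conventions used earlier (e.g.\ in the proof of Lemma~\ref{satinv}), and the rest of the argument reduces to bookkeeping with the $B$-saturation.
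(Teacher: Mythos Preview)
Your argument is correct, but it follows a genuinely different route from the paper's. The paper argues pointwise over the $B$-saturated primes: for the backward direction it invokes Proposition~\ref{vfitt} to produce, at each such prime $P$, a module $N$ with $\widetilde{N}\cong\widetilde{M}$ and $N_P$ generated by $r$ elements, and then uses $\Fitt_{r-1}(M):B^\infty=0$ to force the presentation matrix at $P$ to vanish; for the forward direction it shows $\Fitt_r(M)$ misses every $B$-saturated prime and appeals to Lemma~\ref{bsatprimeslemma} to conclude the saturation is $S$. You instead pass to the affine cover $\{D(f_i)\}$ and invoke the classical criterion \cite{eisenbud-ca}*{Proposition~20.8} in one stroke, then translate the resulting sheaf-level statements $\widetilde{\Fitt_r(M)}=\cO_X$ and $\widetilde{\Fitt_{r-1}(M)}=0$ into the saturated conditions via the ideal--subscheme correspondence and the fact that $S$ is a domain. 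Your path is shorter and self-contained---it does not need Proposition~\ref{vfitt} or Lemma~\ref{bsatprimeslemma}---while the paper's approach keeps everything phrased in terms of $B$-saturated primes, which dovetails with the rest of Sections~\ref{sec:fitt}--\ref{sec:fittresults}. The one point you flag as an obstacle, that local freeness of $\widetilde{M}$ of rank $r$ is detected by projectivity of the $M_{f_i}$, is indeed the only place requiring care: strictly speaking it is the pullback of $\widetilde{M}$ to $\AA^n\setminus V(B)$ whose sections over $D(f_i)$ are $M_{f_i}$, but since that map is a torsor for a smooth group, local freeness and rank transfer cleanly in both directions, and this is consistent with the conventions already used in the proof of Lemma~\ref{satinv}.
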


\begin{proof}
  First, if $\widetilde{M}$ is locally free of rank $r$, then for any $B$-saturated prime $P$, $M_{(P)} \cong \widetilde{M}_P$ is a free $S_{(P)}$-module of rank $r$. Since $P$ is $B$-saturated, there is some $x^{\hat{\sigma}}$ so that $x^{\hat{\sigma}} \notin P$, which implies $M_P = S_PM_{(P)}$. Therefore, $M_P$ has the free presentation
  \[S_P^r \longleftarrow 0,\]
  so $\Fitt_r(M_P) = S_P$ and $\Fitt_{r-1}(M_P) = 0$. Since taking Fitting ideals commutes with localization, $\Fitt_{r-1}(M) = 0$ and for every $B$-saturated prime $P$, $\Fitt_{r}(M) \not\subset P$. By definition,
  \[\codim \Fitt_r(M) = \sup_{P \in \, \Spec S} \{\codim P: \Fitt_r(M) \subset P \}\]
  is strictly greater than the codimension of any $B$-saturated prime (see Section~\ref{sec:background}). The maximal codimension of any $B$-saturated prime of $S$ is $\dim X$ by Lemma~\ref{bsatprimeslemma}. So again by Lemma~\ref{bsatprimeslemma}, $\Fitt_r(M):B^\infty = S$ as desired.

  Now suppose $\Fitt_r(M):B^\infty = S$ and $\Fitt_{r-1}(M):B^\infty = 0$, and let $P$ be a $B$-saturated prime ideal of $S$. By Proposition~\ref{vfitt}, there is an $S$-module $N$ with $\widetilde{N} \cong \widetilde{M}$ such that $N_P$ can be generated by $r$ elements over $S_P$. Let $\varphi_P\colon  G_P \rightarrow S_P^r$ be a free presentation of $N_P$. Then
  \[I_1(\varphi_P) = \Fitt_{r-1}(N_P) \subset \Fitt_{r-1}(N_P):B^\infty = \Fitt_{r-1}(M_P):B^\infty = 0.\]
  Thus $\varphi_P = 0$, which implies $N_P \cong S_P^r$ and $N_{(P)} \cong S_{(P)}^r$. Since each the stalk at each point of the sheaf $\widetilde{N}$ is free of rank $r$, this means $\widetilde{N}$ is locally free of rank $r$, as desired.
\end{proof}

Notice that in everything up to this point, we have required that the complex be bounded. This may seem unsatisfying as Definition~\ref{def:vres} does not require the length of a virtual resolution to be finite.  The following result does not require this hypothesis, again mirroring the classical theory of Fitting ideals and exactness.

\begin{proposition}
  A complex of free $S$-modules
  \[F \xrightarrow{\varphi} G \xrightarrow{\psi}H\]
  with $I(\varphi):B^\infty = S = I(\psi):B^\infty$ has irrelevant homology (i.e. the homology is supported only on $B$) if and only if $\rank(\varphi) + \rank(\psi) = \rank(G)$.
\end{proposition}

\begin{proof}
  The sequence
  \[F \xrightarrow{\; \varphi\; } G \rightarrow \coker \varphi \rightarrow 0\]
   is right exact and a free presentation of $\coker \varphi$. Letting $\rank(G) = n$ and $\rank(\varphi) = r$,
  \[\Fitt_{n-r}(\coker \varphi):B^\infty = I_r(\varphi):B^\infty = S, \text{ and } \]
  \[  \Fitt_{n-r-1}(\coker \varphi):B^\infty = I_{r+1}(\varphi):B^\infty = 0.\]
By Proposition~\ref{localfree}, $\widetilde{\coker \varphi}$ is a locally free sheaf of constant rank $n-r$. As localization is exact, homogeneous localization at a $B$-saturated prime $P$ gives
  \[F_{(P)} \rightarrow G_{(P)} \rightarrow \coker(\varphi_{(P)}) \rightarrow 0.\]
  The rank of $\coker \varphi_{(P)}$ is equal to $\rank(G_{(P)}) - \rank(\varphi_{(P)})$. Notice that $\rank(\varphi_{(P)}) = \rank(\varphi)$ since $I(\varphi)$ contains a nonzerodivisor. Now since
  \[F_{(P)} \xrightarrow{\varphi_{(P)}} G_{(P)} \xrightarrow{\psi_{(P)}} H_{(P)}\]
  is a complex, $\psi_{(P)}$ factors through $\coker \varphi_{(P)}$:
  \begin{center}
  \begin{tikzcd}
    G_{(P)} \arrow[rr, "\psi_{(P)}"] \arrow[dr]& & H_{(P)} \\
    & \coker \varphi_{(P)} \arrow[ur, "\psi_{(P)}'"] &
  \end{tikzcd}
\end{center}
As $\coker \varphi_{(P)}$ is free of rank $n-r$, $\rank(\psi_{(P)}) = \rank(\psi_{(P)}')$ and $I_j(\psi_{(P)}) = I_j(\psi_{(P)}')$ for every $j \in \ZZ_{\ge 0}$.

Now $F_{(P)} \rightarrow G_{(P)} \rightarrow H_{(P)}$ is exact if and only $G_{(P)} \xrightarrow{0} \coker \varphi_{(P)} \xrightarrow{\psi_{(P)}'} H_{(P)}$ is exact in which case $\rank(\psi_{(P)}') = \rank(\coker \varphi_{(P)})$. Assembling the equalities shows
\begin{align*}
  \rank(\varphi) + \rank(\psi) &= \rank(\varphi_{(P)}) + \rank(\psi_{(P)}')\\
                               &= \rank(\varphi_{(P)}) + \rank(G_{(P)}) - \rank(\varphi_{(P)})\\
                               &= \rank(G). \qedhere
                                 \end{align*}
\end{proof}

This proposition can be used in an alternate proof of the reverse direction of Theorem~\ref{mainthm}.

\begin{proof}
  We prove the conditions $(a)$ and $(b)$ imply the complex
\[F_\bullet :=  [0 \rightarrow F_n \xrightarrow{\varphi_n} F_{n-1} \longrightarrow \cdots \longrightarrow F_1 \xrightarrow{\varphi_1} F_0] \]
is a virtual resolution.  

It is enough to show for each $B$-saturated homogeneous prime ideal $P$ of $S$, the complex $\widetilde{(F_\bullet)}_P$ is exact. This is because $\widetilde{(F_\bullet)}_P$ is a graded complex so if it is exact it will remain exact after taking the degree zero strand $\widetilde{(F_\bullet)}_{(P)}$. Since $S$ is a polynomial ring, it is an integral domain, so each $I(\varphi_i)$ contains a nonzerodivisor. Therefore, as localization commutes with taking Fitting ideals \cite{eisenbud-ca}*{Corollary~20.5}, the hypotheses are not weakened. We will show that $(\widetilde{F_\bullet})_P$ is exact by induction on the codimension of $P$. The unique minimal prime ideal of $S$ is $\langle 0 \rangle$, and localizing at this ideal gives a complex of vector spaces. The complex $(F_\bullet)_{\langle 0 \rangle}$ therefore becomes exact by assumption $(a)$.

  Now suppose $P$ is a $B$-saturated prime of codimension $c > 0$. By assumption $(b)$, $\depth(I(\varphi_{c+1}):B^\infty) \ge c+1$, so $I(\varphi_{c+1}):B^\infty$ is not contained in $P$. Therefore $I(\varphi_{c+1})$ is also not contained in $P$. Indeed, if $I(\varphi_{c+1}) \subset P$, then $I(\varphi_{c+1}):B^\infty \subset P:B^\infty = P$. Hence $I(\varphi_{c+1})_P = S_P$, so $(F_\bullet)_P$ can be broken into two complexes:
    \[(G_\bullet)_P \colon \quad  0 \longrightarrow (F_n)_P \longrightarrow \cdots (F_{c+1})_P \xrightarrow{(\varphi_{c+1})_P} (F_{c})_P \longrightarrow (\coker \varphi_{c+1})_P \longrightarrow 0,\]
      \[(H_\bullet)_P \colon \quad 0 \longrightarrow (\coker \varphi_{c+1})_P \xrightarrow{(\varphi_{c}')_P} (F_{c-1})_P \xrightarrow{(\varphi_{c-1})_P} (F_{c-2})_P \longrightarrow \cdots \longrightarrow (F_0)_P,\]
      where $\varphi_{c}'$ is induced from $\varphi_{c}$. Now since $I((\varphi_i)_P) = S_P$ for every $i \ge c+1$, the complex $(G_\bullet)_P$ is exact. All that is left to show is that $(H_\bullet)_P$ is exact. Since $I((\varphi_{c+1})_P) = S_P$, the $S_P$-module $(\coker \varphi_{c+1})_P$ is projective and hence free. Also notice that $I((\varphi_{c}')_P) = I((\varphi_{c})_P)$. 

      By induction, $(H_\bullet)_P$ becomes exact when localizing at any homogeneous $B$-saturated prime ideal $Q$ properly contained in $P$. So by Proposition~\ref{bsatprimesprop}, it becomes exact when localizing at any prime ideal contained in $P$ (if a smaller prime contains a prime component of $B$, then $P$ does as well, which means that $P$ would not be $B$-saturated). Therefore the homology modules are only supported on the maximal ideal $P_P$ and thus have depth 0. The depth of each free $S_P$-module is equal to the codimension of $P$ (since $S$ is Cohen--Macaulay), which is strictly positive. Therefore, applying the Peskine--Szpiro Acyclicity Lemma \cite{ps}*{Lemma~1.8} completes the proof.
\end{proof}

%%%%%%%%%%%%%%%%%%%%%%%%%%%%%%%%%%%%%%%%%%%%%%%%%%%%%%%%%%%%%%%%%%%%%%%%%%%%%%
\begin{bibdiv}
\begin{biblist}
\bib{am}{book}{
   author={Atiyah, M. F.},
   author={Macdonald, I. G.},
   title={Introduction to commutative algebra},
   publisher={Addison-Wesley Publishing Co., Reading, Mass.-London-Don
   Mills, Ont.},
   date={1969},
   pages={ix+128},
   review={\MR{0242802}},
}
  
\bib{bes}{article}{
author = {Berkesch, Christine},
author = {Erman, Daniel},
author = {Smith, Gregory G.},
title = {Virtual Resolutions for a Product of Projective Spaces},
journal = {Alg. Geom.},
fjournal = {Algebraic Geometry},
volume = {7},
year = {2020},
number = {4},
pages = {460--481}
}

\bib{be}{article}{
  author = {Buchsbaum, David A.},
  author = {Eisenbud, David},
  title = {\href{https://doi.org/10.1016/0021-8693(73)90044-6}%
    {What Makes a Complex Exact?}},
  journal = {J. Algebra},
  volume = {25},
  date = {1973},
  pages = {259-268}
}

\bib{cox}{article}{
  author={Cox, David A.},
  title={\href{https://arxiv.org/abs/alg-geom/9210008}%
    {The homogeneous coordinate ring of a toric variety}},
  journal={J. Algebraic Geom.},
  volume={4},
  date={1995},
  number={1},
  pages={17--50},
}

\bib{CLS}{book}{
   author={Cox, David A.},
   author={Little, John B.},
   author={Schenck, Henry K.},
   title={\href{http://dx.doi.org/10.1090/gsm/124}%
     {Toric varieties}},
   series={Graduate Studies in Mathematics~124},
   publisher={Amer. Math. Soc., Providence, RI},
   date={2011},
   pages={xxiv+841},
}

\bib{eisenbud-ca}{book}{
  author={Eisenbud, David},
  title={\href{http://dx.doi.org/10.1007/978-1-4612-5350-1}%
    {Commutative algebra with a view toward algebraic geometry}},
  series={Graduate Texts in Mathematics~150},
  % volume={150},
  % note={With a view toward algebraic geometry},
  publisher={Springer-Verlag, New York},
  date={1995},
  pages={xvi+785},
  % isbn={0-387-94268-8},
  % isbn={0-387-94269-6},
  % review={\MR{1322960}},
  % doi={10.1007/978-1-4612-5350-1},
}

\bib{Fitting}{article}{
author = {Fitting, H.},
journal = {Jahresbericht der Deutschen Mathematiker-Vereinigung},
keywords = {Abstract theory of rings, fields, etc.},
pages = {195-228},
title = {\href{http://eudml.org/doc/146122}%
  {Die Determinantenideale eines Moduls.}},
volume = {46},
year = {1936},
}

 \bib{ps}{article}{
   author={Peskine, C.},
   author={Szpiro, L.},
   title={Dimension projective finie et cohomologie locale}
   journal={Inst. Hautes \'{E}tudes Sci. Publ. Math}
   volume={42}
   date={1973}
   pages={47-119},
%   % isbn={0-387-90244-9},
%   % review={\MR{0463157}},
 }

\bib{maclagan-smith}{article}{
   author={Maclagan, Diane},
   author={Smith, Gregory G.},
   title={\href{http://dx.doi.org/10.1515/crll.2004.040}%
     {Multigraded Castelnuovo-Mumford regularity}},
   journal={J. Reine Angew. Math.},
   volume={571},
   date={2004},
   pages={179--212},
   % issn={0075-4102},
   % review={\MR{2070149}},
   % doi={10.1515/crll.2004.040},
}

\bib{M2}{misc}{
  label={M2},
  author={Grayson, Daniel~R.},
  author={Stillman, Michael~E.},
  title={Macaulay2, a software system for research
    in algebraic geometry},
  publisher = {available at \url{http://www.math.uiuc.edu/Macaulay2/}},
}

\bib{shaf}{book}{
  author={Shafarevich, Igor~R.},
  title={\href{http://dx.doi.org/10.1007/978-3-642-37956-7}%
    {Basic Algebraic Geometry I}},
  subtitle={Varieties in Projective Space},
  publisher={Springer-Verlag, New York-Heidelberg},
  date={1977},
  pages={247},
  edition={3rd},
  % isbn={0-387-90244-9},
  % review={\MR{0463157}},
}

 \end{biblist}
\end{bibdiv}

 \raggedright

\end{document}